\documentclass[11pt]{article}

\usepackage[margin=1in]{geometry}
\usepackage{amsmath}
\usepackage{amsfonts}
\usepackage{amsthm}
\usepackage{amssymb}
\usepackage{mathrsfs}
\usepackage{nicefrac}
\usepackage[T1]{fontenc}
\usepackage[ansinew]{inputenc}
\usepackage{lmodern}
\usepackage{enumitem}
\usepackage{natbib}
\bibpunct{(}{)}{;}{a}{,}{,}
\usepackage[colorlinks=true,linkcolor=blue,citecolor=blue,urlcolor=blue]{hyperref}
\usepackage[capitalise]{cleveref}

\newcommand{\ud}[1]{\, \mathrm{d}#1}
\newcommand{\R}{\mathbb{R}}
\renewcommand{\P}{ \mathbb P }
\newcommand{\EXP}{\mathbb{E}}

\DeclareMathOperator{\diag}{diag}
\DeclareMathOperator{\Diag}{Diag}
\DeclareMathOperator{\sign}{sign}

\newcommand{\pp}{\mathfrak{p}}
\newcommand{\ii}{\mathfrak{i}}

\theoremstyle{plain}
\newtheorem{theorem}{Theorem}
\newtheorem{lemma}{Lemma}
\newtheorem{proposition}{Proposition}
\newtheorem{corollary}{Corollary}

\newtheorem*{theoremA}{Main Theorem}
\theoremstyle{definition}
\newtheorem{definition}{Definition}
\newtheorem{assumption}{Assumption}

\theoremstyle{remark}
\newtheorem{remark}{Remark}
\crefname{assumption}{Assumption}{Assumptions}
\crefname{example}{Example}{Examples}

\begin{document}

\title{An exact coexistence criterion for stochastic Lotka--Volterra\\ systems with Volterra--Lyapunov stable interactions}
\author{Jorge Ramirez Osorio \thanks{Universidad Nacional de Colombia, Sede Medellín. \texttt{jmramirezo@unal.edu.co}}}
\date{\today}
\maketitle

\begin{abstract}
We consider stochastic Lotka--Volterra systems $\ud X=\Diag(X)(\alpha+AX)\ud t+\Diag(X)\,\Gamma\ud B$ with nondegenerate noise and a Volterra--Lyapunov stable interaction matrix $A$, that is, $DA+A^\top D\prec0$ for some positive diagonal matrix $D$. This class covers every self-limited predator--prey food web that is conservative in the sense of Volterra, in particular every such web whose interaction graph is a forest. We give a necessary and sufficient condition for invariant coexistence: the process has an invariant probability measure on $(0,\infty)^n$, that is, a stationary distribution under which all species have positive densities, if and only if $m=-A^{-1}\beta$ is positive, where $\beta=\alpha-\frac12\diag(\Gamma\Gamma^\top)$ is the vector of noise-corrected growth rates. This measure is then unique and ergodic, has mean $m$, and attracts every interior initial condition exponentially fast in total variation. Thus environmental noise affects coexistence only through the It\^o shift $\alpha\mapsto\beta$, and the criterion is the exact counterpart of the feasibility criterion for the deterministic system $\dot x=\Diag(x)(\beta+Ax)$. Both directions rest on the fact that, for Lotka--Volterra systems, invasion rates are affine in the mean of the underlying measure. Necessity follows from an exact formula for the means of invariant measures supported on faces of the orthant. Sufficiency follows from the persistence theorem of Hening and Nguyen, for which the explicit weights $Dm$ give a stochastic counterpart of Hofbauer's average Lyapunov function, so that no enumeration of the invariant measures on the boundary is required. Applied to subcommunities, the result also characterizes every stationary regime in which some species are absent.
\end{abstract}

\noindent\textbf{Keywords:} stochastic Lotka--Volterra systems; food webs; coexistence; stochastic persistence; invariant measures; Volterra--Lyapunov stability.

\noindent\textbf{MSC 2020:} 60H10, 92D25, 37H15, 60J60.

\section{Introduction}\label{sec:intro}

A basic question in theoretical ecology is whether a community of interacting species can persist indefinitely in a randomly fluctuating environment. For populations subject to environmental white noise, a standard model is the stochastic Lotka--Volterra system
\begin{equation}\label{eq:SDE}
	\ud X_i(t) = X_i(t)\left(\alpha_i + \sum_{j=1}^n A_{ij} X_j(t)\right) \ud t + X_i(t) \sum_{j=1}^n \Gamma_{ij} \ud B_j(t),
\end{equation}
where $X_i(t)$ is the density of species $i$, $\alpha\in\R^n$ collects the intrinsic growth rates, $A\in\R^{n\times n}$ is the interaction matrix, $\Gamma\in\R^{n\times n}$ is the diffusion matrix, and $B$ is a standard $n$-dimensional Brownian motion. Modelling environmental fluctuations by multiplicative white noise goes back to \citet{may1973stability} and \citet{turelli1977random}, and the persistence of interacting populations in fluctuating environments has been an active subject since then; see \citet{schreiber2017coexistence} for a review of the current theory, and \citet{lande2003stochastic} for a textbook account of environmental stochasticity in population dynamics. In this setting, we say that the $n$ species exhibit \emph{invariant coexistence} if $X$ has an invariant probability measure $\pi$ with $\pi(\R^{n,\circ}_+)=1$, where $\R^{n,\circ}_+=(0,\infty)^n$, that is, if there is a stationary regime in which all species are present. The stronger notion of \emph{strong stochastic persistence} \citep{hening2018coexistence} requires, in addition, that $\pi$ be unique and that the law of $X(t)$ converge to $\pi$ from every interior initial condition.

Our main assumption is that the interaction matrix $A$ in \cref{eq:SDE} is \emph{Volterra--Lyapunov stable}: there is a positive diagonal matrix $D=\Diag(d)$ such that $DA+A^\top D\prec0$. The corresponding deterministic Lotka--Volterra system, $\dot x=\Diag(x)(\alpha+Ax)$, is completely understood: for every $\alpha$ it has a unique saturated equilibrium, which is globally stable \citep{takeuchi1980existence}, and it is permanent if and only if the interior equilibrium $x^*=-A^{-1}\alpha$ is feasible, $x^*>0$; in that case $V(x)=\sum_i d_i\left(x_i-x_i^*-x_i^*\log(x_i/x_i^*)\right)$ is a global Lyapunov function \citep{goh1977global}. The saturated equilibrium solves a linear complementarity problem, which is the basis of current work on large random Lotka--Volterra communities \citep{clenet2022equilibrium,akjouj2024complex}.

For stochastic systems, persistence is formulated in terms of \emph{invasion rates}: the long-run per-capita growth rates $\lambda_i(\mu)$ of each species $i$ under an ergodic invariant measure $\mu$ supported on the boundary $\partial\R^n_+$ of the orthant, that is, a stationary regime in which at least one species is absent \citep{schreiber2011persistence,benaim2018stochastic, hening2018coexistence,hening2021general,hofbauer2022permanence}. The main result of \citet{hening2018coexistence}, written in the equivalent weighted form of \citet[Lemma~4]{schreiber2011persistence}, gives the following sufficient condition: if there are weights $p_i>0$ such that $\sum_ip_i\lambda_i(\mu)$ is bounded below by a positive constant over all such $\mu$, then the solution $X$ of \cref{eq:SDE} is strongly stochastically persistent. This result is the stochastic analogue of the average Lyapunov function criterion of \citet{hofbauer1981general}. Applying it requires, in principle, computing the invasion rates of all ergodic invariant measures on all faces of the orthant. What makes the stochastic Lotka--Volterra system particularly simple is that the invasion rates depend on a boundary measure only through its mean, and these means solve linear systems; this was used for three species by \citet[Prop.~8.2]{hening2022classification} and for food chains by \citet{hening2018stochastic,hening2018persistence}. Still, the number of faces grows like $2^n$, and complete classifications are available only in low dimension or for special topologies \citep{rudnicki2003long,rudnicki2007influence, du2016conditions,hening2018stochastic,hening2018persistence,hening2022classification}.

Related results have been obtained for other noise models: \citet{benaim2022stochastic} treat food chains with degenerate noise, and \citet{bourquin2021persistence} showed that for food chains with randomly switched coefficients persistence is equivalent to feasibility of the equilibrium of the averaged vector field. Closest to our setting, \citet{videla2022strong} extends the theorem of \citet{hening2018coexistence} to Lotka--Volterra systems driven by Brownian and L\'evy noise, for \emph{layered} communities: acyclic food webs with trophic levels and self-limitation. The analysis there uses weights $c_i>0$ with $c_i|A_{ij}|\ge c_j|A_{ji}|$ whenever species $j$ preys on species $i$, an inequality version of Volterra's conservativeness, and the weighted invasion condition is verified through explicit sufficient inequalities for webs with two and three species and for food chains, which are not sharp in the presence of jumps \citep[Sect.~3]{videla2022strong}. To our knowledge, no criterion that is both necessary and sufficient and stated directly in terms of the coefficients is available for general interaction networks, even with Brownian noise only.

In this paper we show that, for Volterra--Lyapunov stable interactions, the deterministic picture carries over exactly to \cref{eq:SDE} after the \emph{It\^o shift} $\alpha\mapsto\beta$, which replaces the intrinsic growth rates by the noise-corrected growth rates $\beta:=\alpha-\frac12\diag(\Sigma)$. Here $\Sigma:=\Gamma\Gamma^\top$ is the covariance matrix of the noise, $\diag(\Sigma)$ is the vector of its diagonal entries, and the correction $-\frac12\Sigma_{ii}$ is the one that appears in It\^o's formula for $\log X_i$.

\begin{theoremA}
	Assume that $DA+A^\top D\prec0$ for some positive diagonal matrix $D$ and that $\Sigma\succ0$. Then \cref{eq:SDE} has an invariant probability measure $\pi$ with $\pi(\R^{n,\circ}_+)=1$ if and only if $m:=-A^{-1}\beta>0$. In that case $\pi$ is the unique such measure, it is ergodic, its mean is $\int x\,\pi(\ud x)=m$, and the transition probabilities of $X$ converge to $\pi$ in total variation, exponentially fast, from every $x\in\R^{n,\circ}_+$.
\end{theoremA}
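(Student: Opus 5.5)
The proof rests on one identity. If $\mu$ is an ergodic invariant probability measure of \cref{eq:SDE} with finite first moment, and $\mu$ lives on the face $\{x_i>0,\ i\in S\}$ with $x_j=0$ for $j\notin S$, then Itô's formula gives
\[
\ud \log X_i=(\beta_i+(AX)_i)\ud t+(\Gamma\ud B)_i .
\]
Dividing by $t$ and using ergodicity yields the invasion rates
\[
\lambda_i(\mu)=\beta_i+(A\bar\mu)_i ,\qquad \bar\mu=\int x\,\mu(\ud x).
\]
For $i\in S$ we must have $\lambda_i(\mu)=0$. I would justify this by showing $\frac1t\log X_i(t)\to0$ under the stationary law, using that $\mu(\log x_i)$ is finite or a truncation argument; this is the standard step from Hening--Nguyen. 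Hence the mean $\bar\mu$ solves the restricted linear system $(\beta+A\bar\mu)_S=0$ with $\bar\mu_{S^c}=0$. Principal submatrices of a Volterra--Lyapunov stable matrix are again Volterra--Lyapunov stable, hence nonsingular. So $\bar\mu=\bar\mu(S)$ depends only on the face $S$.

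Moments first. Before using the identity, I would establish tightness and moment bounds for all invariant measures. Take $V(x)=\sum_i d_i x_i$. Its generator is $\sum_i d_i x_i(\alpha_i+(Ax)_i)$, and $x^\top DAx\le -c|x|^2$, so $\mathcal L V\le C-cV$. This gives existence of invariant measures on every face, uniformly integrable first moments, and hence the affine formula above.

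Necessity. Suppose $\pi$ is invariant with $\pi(\R^{n,\circ}_+)=1$. Decompose $\pi$ into ergodic components; each lives on the interior, with $S=\{1,\dots,n\}$. The identity then gives $\beta+A\bar\pi=0$, so $\bar\pi=-A^{-1}\beta=m$. Since $\pi$ charges only the interior, $m>0$. The same computation shows that any such $\pi$ has mean $m$.

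Sufficiency. Assume $m>0$ and choose weights $p=Dm>0$. For an ergodic $\mu$ on the boundary face $S\neq\{1,\dots,n\}$, write $\bar\mu=\bar\mu(S)$. Using $\beta=-Am$,
\[
\sum_i p_i\lambda_i(\mu)=m^\top D(\beta+A\bar\mu)=m^\top DA(\bar\mu-m).
\]
Since $(\beta+A\bar\mu)_i=0$ for $i\in S$, we also have $\bar\mu^\top D(\beta+A\bar\mu)=0$. Subtracting,
\[
\sum_i p_i\lambda_i=(m-\bar\mu)^\top DA(\bar\mu-m)=-(\bar\mu-m)^\top DA(\bar\mu-m)\ge c|\bar\mu-m|^2 .
\]
Because $m>0$ and $\bar\mu_j=0$ for some $j\notin S$, we have $\bar\mu\neq m$, so the bound is strictly positive. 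There are at most $2^n$ faces, so the bound is uniform. The persistence theorem of Hening and Nguyen then applies, since $\Sigma\succ0$ and the moment and Lyapunov hypotheses follow from $V$. It gives a unique invariant measure on the interior, with exponential convergence in total variation. The mean is $m$ by the necessity step.

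The main obstacle is the rigorous justification that $\lambda_i(\mu)=0$ for $i\in S$, that is, integrability of $\log x_i$ under $\mu$ or an equivalent argument. I would invoke the corresponding lemma of Hening--Nguyen, which proves this in this generality. Everything else is the linear-algebra identity above.
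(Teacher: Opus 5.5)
Your proposal is correct and follows the paper's proof. The vanishing invasion rates of resident species force the mean of any face-supported invariant measure to equal the face equilibrium, which gives necessity and the mean. The weights $p=Dm$, via the identity $p^\top\lambda(\mu)=-(\bar\mu-m)^\top DA(\bar\mu-m)$, then verify the Hening--Nguyen condition uniformly over the finitely many faces. The paper differs only in minor ways: it first rescales to $D=I$, takes first moments from Hening--Nguyen's Lemma~3.3 rather than your linear Lyapunov function, and proves $\lambda_i(\mu)=0$ directly for non-ergodic $\mu$ (Birkhoff plus $\log X_i(t)/t\to0$ in probability by stationarity) instead of using the ergodic decomposition. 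You leave ergodicity of $\pi$ unaddressed, but it follows at once from uniqueness: normalizing the restriction of $\pi$ to a nontrivial invariant set would give a second invariant measure on $\R^{n,\circ}_+$.
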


The precise statement is \cref{thm:main} and its starting point is a simple observation. For an invariant measure supported on a face of the orthant, every species present on that face has zero long-run growth rate (\citealp[Lemma~2.1]{hening2018coexistence}; see also \cref{lem:loglimit}). For \cref{eq:SDE}, the vector $\lambda(\mu)$ of long-run growth rates under a stationary distribution $\mu$ is affine in its mean $\mu(X)=\int x\,\mu(\ud x)$, so the zero-growth conditions are linear equations for that mean (see \cref{eq:def_lambda}). Consequently, if invariant coexistence holds, the mean of the stationary distribution is determined even though the distribution itself is in general not explicit: it must be $\mu(X)=m=-A^{-1}\beta$. Since a distribution concentrated on $(0,\infty)^n$ has a positive mean, $m>0$ is necessary for invariant coexistence (\cref{prop:mean,cor:necessity}). This argument uses only the well-posedness of \cref{eq:SDE}, the finiteness of the mean and the invertibility of $A$, and it suggests that the sign of $m$ should decide coexistence.

Sufficiency comes from applying the same computation on the boundary $\partial\R^n_+$, that is, to the ergodic invariant measures $\mu$ under which at least one species is absent. By ergodicity, such a measure is supported on a single proper face $\R^{J,\circ}_+$ with $J\subsetneq\{1,\dots,n\}$: it is a stationary regime of the subcommunity $J$ (see \cref{eq:faces}). Its mean is the face equilibrium $m^J$, obtained from $-(A^J)^{-1}\beta^J$ by zero-padding, where $A^J$ and $\beta^J$ are the restrictions of $A$ and $\beta$ to $J$; hence its vector of invasion rates is $\lambda(\mu)=A(m^J-m)$ (\cref{lem:weighted}). The candidate mean $m$ then supplies the weights required by the persistence criterion of \citet{hening2018coexistence}. In coordinates in which $D$ is the identity, we get the key identity 
\begin{equation}
	m^\top\lambda(\mu)=-\left(m^J-m\right)^\top A\left(m^J-m\right).
\end{equation}
If $m>0$, this quadratic form is positive, because $A$ is negative definite and $m^J\neq m$ ($m^J$ vanishes outside $J$). Since there are finitely many faces, the lower bound is uniform over all boundary regimes (\cref{eq:punch}), and \cref{thm:HN} gives strong stochastic persistence. The argument does not need to know which faces actually carry a stationary regime, since every proper face gives a positive value. Applying \cref{thm:main} to subcommunities then shows that a face carries a stationary regime if and only if $-(A^J)^{-1}\beta^J>0$, so the stationary regimes of \cref{eq:SDE} on the boundary correspond one to one to the feasible boundary equilibria of $\dot x=\Diag(x)(\beta+Ax)$.

The organization of this paper is as follows. \Cref{sec:model} states the assumptions and preliminaries, \cref{sec:rates} computes the means of invariant measures on faces of the orthant, and \cref{sec:main} proves \cref{thm:main}. \Cref{sec:discussion} discusses the scope of the assumptions, the Jensen effect of environmental noise, the relation with Hofbauer's criterion and the Lyapunov function of Volterra and Goh, and subcommunities.

\section{Model, assumptions and preliminaries}\label{sec:model}

\subsection{Notation and assumptions}
 We denote $[n]=\{1,\dots,n\}$, the orthant by $\R^n_+=[0,\infty)^n$ and its interior by $\R^{n,\circ}_+=(0,\infty)^n$. Inequalities between vectors are componentwise; in particular $x>0$ means $x_i>0$ for every $i$, and $\mathbf 1=(1,\dots,1)^\top$. For a vector $x\in\R^n$, $\Diag(x)$ denotes the $n\times n$ diagonal matrix with diagonal entries $x_1,\dots,x_n$; for a matrix $M\in\R^{n\times n}$, $\diag(M)=(M_{11},\dots,M_{nn})^\top$ denotes the vector of its diagonal entries. For a symmetric matrix $S$, $S\prec0$ ($S\succ0$) means that $S$ is negative (positive) definite. We write $\|x\|_1=\sum_i|x_i|$ and, for a matrix $M$, $\|M\|_1=\max_j\sum_i|M_{ij}|$ for the induced norm. For $x\in\R^n_+$, $\P_x$ and $\EXP_x$ denote probability and expectation for the solution of \cref{eq:SDE} with $X(0)=x$, and $\P_\mu=\int\P_x\,\mu(\ud x)$ for a probability measure $\mu$ on $\R^n_+$. A probability measure $\mu$ on $\R^n_+$ is \emph{invariant} for $X$ if $X(0)\sim\mu$ implies $X(t)\sim\mu$ for every $t\ge0$, and an invariant probability measure is \emph{ergodic} if it gives measure $0$ or $1$ to every invariant Borel set \citep[Ch.~10]{kallenberg2002foundations}.

For $J\subseteq[n]$ let $\pp_{J}:\R^n\to\R^{|J|}$ be the projection onto the coordinates in $J$ and $\ii_{J}:\R^{|J|}\to\R^n$ the zero-padding inclusion, so that $\pp_{J}\circ\ii_{J}$ is the identity of $\R^{|J|}$. For a vector $x$ we write $x^J=\pp_{J}x$, and for a matrix $M$ we write $M^J=\pp_{J}\,M\,\ii_{J}$ for its principal submatrix with rows and columns in $J$. We denote the $J$-face and its relative interior by
\begin{equation}\label{eq:faces}
	\R^{J}_+:=\{x\in\R^n_+:\ x_i=0\ \forall i\notin J\}, \qquad
	\R^{J,\circ}_+:=\{x\in\R^{J}_+:\ x_i>0\ \forall i\in J\} .
\end{equation}
In particular $\R^{\emptyset}_+=\R^{\emptyset,\circ}_+=\{0\}$ and $\R^{[n],\circ}_+=\R^{n,\circ}_+$. The sets $\R^{J,\circ}_+$, $J\subseteq[n]$, partition $\R^n_+$, and the boundary is $\partial\R^n_+=\bigcup_{J\subsetneq[n]}\R^{J,\circ}_+$. We say that a probability measure $\mu$ on $\R^n_+$ is \emph{supported on} a Borel set $S\subseteq\R^n_+$ if $\mu(S)=1$.

The only assumption on the parameters of our model is:

\begin{assumption}\label{ass:VL}
	\begin{enumerate}[label=\textup{(\alph*)},ref=\theassumption-\alph*]
		\item\label[assumption]{ass:VL-a} The covariance matrix is nondegenerate: $\Sigma:= \Gamma\Gamma^\top \succ0$.
		\item\label[assumption]{ass:VL-b} The interaction matrix is \emph{Volterra--Lyapunov stable}:
		\begin{equation}\label{eq:VL}
			\text{there is} \quad d\in\R^{n,\circ}_+ \quad \text{such that} \quad DA+A^\top D\prec0, \quad\text{with}\quad D=\Diag(d).
		\end{equation}
	\end{enumerate}
\end{assumption}

We discuss the scope of \cref{ass:VL} in \cref{sec:disc-scope}.

\subsection{Consequences of \cref{ass:VL}.}
First, we note that the matrix $D$ in \cref{eq:VL} can be taken to be the identity by a change of variables. Since $D$ is constant and diagonal, $\tilde X(t):=DX(t)$ solves \cref{eq:SDE} with the same $\alpha$ and $\Gamma$ and with community matrix $\tilde A:=AD^{-1}$, which satisfies
\begin{equation}\label{eq:VL-reduced}
	\tilde A+\tilde A^\top=D^{-1}\left(DA+A^\top D\right)D^{-1}\prec0 ,
\end{equation}
because the right-hand side is congruent to $DA+A^\top D$. This allows us \emph{from now on to take $D=I$ in \cref{ass:VL-b}, that is, to assume}
\begin{equation}\label{eq:assA}
	A+A^\top\prec 0,
\end{equation}
except for our main \cref{thm:main}, which is stated for a general $D$ and reduced to \cref{eq:assA} in its proof.

The following consequences of \eqref{eq:assA} are used throughout.

\begin{remark}\label{rem:principal}
	Condition \eqref{eq:assA} implies that $A$ is negative definite, in the sense that
	\begin{equation}\label{eq:negdef}
		x^\top Ax=\frac12x^\top(A+A^\top)x<0, \quad \text{for all} \quad x \neq 0.
	\end{equation}
	The same holds for every principal submatrix, since $y^\top A^Jy=(\ii_Jy)^\top A\,(\ii_Jy)$ for $y\in\R^{|J|}$. Hence, for every nonempty $J\subseteq[n]$, $A^J$ is invertible, its diagonal entries are negative, and its eigenvalues have negative real parts, so $\sign(\det A^J)=(-1)^{|J|}$ and $-A$ is a P-matrix. Invertibility, the signs of the diagonal entries and the sign of $\det A^J$ are preserved by the scaling $A\mapsto AD^{-1}$, because $A^J=\tilde A^J\Diag(d^J)$ with $d^J=\pp_Jd>0$; so they also hold under \cref{ass:VL-b} with a general $D$.
\end{remark}

\subsection{Well-posedness and moments.}
We use the framework of \citet{hening2018coexistence}, who study Kolmogorov systems $\ud X_i=X_if_i(X)\ud t+X_ig_i(X)\ud E_i$, where $E$ is a Brownian motion with covariance matrix $(\sigma_{ij})$. Our system \cref{eq:SDE} is of this form, with
\begin{equation}\label{eq:HNform}
	f(x)=\alpha+Ax,\qquad g_i\equiv1,\qquad E=\Gamma B,\qquad
	(\sigma_{ij})=\Sigma .
\end{equation}
Their standing Assumption~1.1 is verified in \citet[Example~1.1]{hening2018coexistence} for competitive Lotka--Volterra systems and for two-species predator--prey systems, and in \citet[Prop.~8.1]{hening2022classification} for three-species systems. The following lemma uses the same arguments to extend these results to our model \cref{eq:SDE} in any dimension.

\begin{lemma}\label{lem:HNass}
	\begin{enumerate}[label=(\arabic*),ref=\thelemma-\arabic*]
		\item\label[lemma]{lem:HNass-wp} \Cref{eq:SDE} satisfies Assumption~1.1 of \citet{hening2018coexistence} with $c=\mathbf 1$. Consequently for every $x\in\R^n_+$ there is a pathwise unique global strong solution with $X(0)=x$, $X$ is a Feller process on $\R^n_+$, and each $\R^{J,\circ}_+$, $J\subseteq[n]$, is invariant.
		\item\label[lemma]{lem:HNass-mom} Every invariant probability measure $\mu$ of $X$ has a finite first moment, $\int\|x\|_1\,\mu(\ud x)<\infty$.
	\end{enumerate}
\end{lemma}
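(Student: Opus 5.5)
Assumption~1.1 of \citet{hening2018coexistence} asks for local Lipschitz coefficients, nondegenerate noise, and a Lyapunov-type bound. The bound is on the function $\|x\|_1=c^\top x$ with $c=\mathbf 1$: there should be $\gamma_b>0$ and $C>0$ such that
\begin{equation*}
	\frac{\sum_i x_if_i(x)}{1+\|x\|_1}-\frac12\,\frac{x^\top\Sigma x}{(1+\|x\|_1)^2}+\gamma_b\Bigl(1+\sum_i\max_j|g_j(x)|\Bigr)\le C
\end{equation*}
on $\R^n_+$, together with a uniform bound on the growth terms. My plan is to verify these items in turn, read off the well-posedness conclusions of part~(1) from \citet{hening2018coexistence}, and then derive part~(2) from the resulting Lyapunov inequality.

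Local Lipschitz continuity is immediate, since $f$ is affine and $g\equiv1$. Nondegeneracy is \cref{ass:VL-a}. The key estimate is on the drift of $\|x\|_1$:
\begin{equation*}
	\sum_i x_if_i(x)=\alpha^\top x+x^\top Ax\le\|\alpha\|_\infty\|x\|_1-\kappa|x|^2\le\|\alpha\|_\infty\|x\|_1-\frac{\kappa}{n}\|x\|_1^2 .
\end{equation*}
Here $\kappa>0$ is the smallest eigenvalue of $-\frac12(A+A^\top)$, which is positive by \cref{eq:assA} and \cref{eq:negdef}. The second inequality uses $|x|^2\ge\|x\|_1^2/n$. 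So the quadratic drift is strictly negative in every direction of the orthant, not only the competitive ones. This is the point where \cref{ass:VL-b} replaces the sign conditions used in \citet[Example~1.1]{hening2018coexistence}.

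Dividing by $1+\|x\|_1$ gives a term bounded above by $\|\alpha\|_\infty-\frac{\kappa}{n}\frac{\|x\|_1^2}{1+\|x\|_1}$, which tends to $-\infty$ linearly. The noise term $-\frac12 x^\top\Sigma x/(1+\|x\|_1)^2$ is nonpositive, and the term involving $g$ is constant. Hence the left-hand side is bounded above, and indeed is at most $C-\gamma\|x\|_1$ for some $\gamma>0$ and all $x$. If the assumption also requires the stronger form with an extra $\gamma_b(1+\sum_i|f_i(x)|)$, this still holds, because $|f_i(x)|$ grows only linearly in $\|x\|_1$ and is absorbed by choosing $\gamma_b$ small. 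I expect the main obstacle to be this bookkeeping: matching precisely the form of Assumption~1.1 (which norms, which exponents, whether $\gamma_b$ multiplies $|f_i|$). The estimate itself is robust. With the assumption verified, \citet[Thm.~A.1 and Prop.~?]{hening2018coexistence} give existence and pathwise uniqueness of a global strong solution and the Feller property.

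Invariance of the faces $\R^{J,\circ}_+$ follows from the explicit representation
\begin{equation*}
	X_i(t)=X_i(0)\exp\Bigl(\int_0^t(\alpha_i+(AX(s))_i)\ud s-\tfrac12\Sigma_{ii}t+(\Gamma B(t))_i\Bigr),
\end{equation*}
which holds by It\^o's formula on $\log X_i$ for $X_i(0)>0$, together with uniqueness, which gives $X_i\equiv0$ when $X_i(0)=0$. Thus positivity of each coordinate is preserved, and so is vanishing.

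For part~(2), let $V(x)=1+\|x\|_1$. The generator computation above gives $\mathcal LV(x)=\alpha^\top x+x^\top Ax\le C_1-\gamma\|x\|_1$. I will localize by stopping at $\tau_k=\inf\{t:\|X(t)\|_1\ge k\}$ and take expectations under $\P_\mu$ with $\mu$ invariant. To avoid assuming integrability of $V$ under $\mu$, I will use $V\wedge R$-type truncations or, more simply, Dynkin's formula started from a fixed $x$:
\begin{equation*}
	\frac1t\int_0^t\EXP_x\bigl[\gamma\|X(s)\|_1\bigr]\ud s\le C_1+\frac{V(x)}{t}.
\end{equation*}
Then I integrate $\EXP_x[\|X(s)\|_1\wedge R]$ against $\mu$. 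By invariance the left side becomes $\gamma\int(\|x\|_1\wedge R)\,\mu(\ud x)$, up to an error from $V(x)/t$ that is finite for each fixed $x$. Letting $t\to\infty$ pointwise with Fatou, then $R\to\infty$ by monotone convergence, gives $\int\|x\|_1\,\mu(\ud x)\le C_1/\gamma$. Alternatively I will cite the moment bound of \citet[Lemma~2.1 / Thm.~A]{hening2018coexistence}, which follows from Assumption~1.1 directly.
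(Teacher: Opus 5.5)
Your part~(1) is the paper's argument: the bound $x^\top Ax\le-\kappa\|x\|_2^2\le-\frac{\kappa}{n}\|x\|_1^2$ drives the bracket to $-\infty$. The linear growth of $\|\alpha+Ax\|_1$ is then absorbed by a small $\gamma_b$; the paper takes $\gamma_b=\kappa/(2n\|A\|_1)$.

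Two corrections to how you frame it. First, Assumption~1.1(3) of Hening and Nguyen is not a bound by a constant $C$. It is the condition $\limsup_{\|x\|\to\infty}[\cdots]<0$ with the term $\gamma_b\bigl(1+\sum_i(|f_i(x)|+g_i^2(x))\bigr)=\gamma_b(1+n+\|\alpha+Ax\|_1)$. So it is exactly your ``stronger form'' that is required, and your estimate covers it. Second, the conclusions you leave as ``Prop.~?'' are their Lemmas~3.1 and~3.2. Those lemmas also give face invariance, so your exponential-representation argument for invariance is a self-contained extra rather than a needed step.

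Part~(2) takes a genuinely different route. The paper cites their Lemma~3.3, a uniform bound $\int(1+\sum_i|f_i|)\ud\mu\le H_2$ over all invariant measures. It converts this to a first-moment bound using $\|\alpha+Ax\|_1\ge\|x\|_1/\|A^{-1}\|_1-\|\alpha\|_1$, that is, invertibility of $A$. Your fallback citation would need this same step. The paper's route is short but rests on the external lemma.

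Your Dynkin argument with the linear function $V=1+\|x\|_1$ is self-contained. It uses only the dissipativity $\mathcal LV\le C_1-\gamma\|x\|_1$ you already established, and it yields the explicit uniform bound $C_1/\gamma$.

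Do the truncation in the right order. First bound $\frac{\gamma}{t}\int_0^t\EXP_x[\|X(s)\|_1\wedge R]\ud s\le\min\{\gamma R,\,C_1+V(x)/t\}$ pointwise, \emph{before} integrating against $\mu$. Then let $t\to\infty$ by dominated convergence with dominating constant $\gamma R$, rather than by Fatou. Integrating $V(x)/t$ against $\mu$ directly would presuppose the finiteness you are proving.
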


\begin{proof}
	(1) Parts~(1) and~(2) of Assumption~1.1 of \citet{hening2018coexistence} hold because, by \cref{eq:HNform}, the matrix $(g_ig_j\sigma_{ij})$ equals $\Sigma\succ0$ and $f$ is affine. Part~(3) requires $\gamma_b>0$ with
	\[
		\limsup_{\|x\|_1\to\infty,\ x\in\R^n_+}
		\left[\frac{\mathbf 1^\top\Diag(x)(\alpha+Ax)}{1+\mathbf 1^\top x}
		-\frac12\frac{x^\top\Sigma x}{(1+\mathbf 1^\top x)^2}
		+\gamma_b\left(1+n+\|\alpha+Ax\|_1\right)\right]<0 .
	\]
	The second term is nonpositive. For $x\in\R^n_+$, $\mathbf 1^\top x=\|x\|_1$, and, since $A$ is negative definite by \cref{eq:negdef}, $x^\top Ax\le-\kappa\|x\|_2^2\le-\frac{\kappa}{n}\|x\|_1^2$, with $\kappa:=-\frac12\lambda_{\max}(A+A^\top)>0$, where $\lambda_{\max}$ denotes the largest eigenvalue. Hence the first term is at most $-\frac{\kappa}{n}\|x\|_1+O(1)$, while the third is at most $\gamma_b\|A\|_1\|x\|_1+O(1)$. With $\gamma_b:=\kappa/(2n\|A\|_1)$ the bracket is at most $-\frac{\kappa}{2n}\|x\|_1+O(1)$, so the $\limsup$ is $-\infty$. The well-posedeness, Feller property and invariance of $\R^{J,\circ}_+$ for every $J$, follow from \citet[Lemmas~3.1 and~3.2]{hening2018coexistence},

	(2) By \citet[Lemma~3.3]{hening2018coexistence}, there is $H_2>0$ such that every invariant probability measure $\mu$ satisfies $\int\left(1+\sum_i|f_i(x)|\right)\mu(\ud x)\le H_2$ (their bound carries an additional factor $(1+\mathbf 1^\top x)^{\delta_0}\ge1$). Since $A$ is invertible (\cref{rem:principal}), $\sum_i|f_i(x)|=\|\alpha+Ax\|_1\ge\|x\|_1/\|A^{-1}\|_1-\|\alpha\|_1$, and hence $\int\|x\|_1\,\mu(\ud x)\le\|A^{-1}\|_1\left(H_2+\|\alpha\|_1\right)$.
\end{proof}

\section{Invasion rates and invariant means}\label{sec:rates}

Recall the vector of \textit{noise-corrected growth rates}
\begin{equation}\label{eq:def_beta}
	\beta:=\alpha-\tfrac12\diag(\Sigma)\in\R^n ,
\end{equation}
where $\Sigma=\Gamma\Gamma^\top$ is the covariance matrix in \cref{ass:VL-a}. Since $A$ is invertible (\cref{rem:principal}), the vector
\begin{equation}\label{eq:def_m}
	m:=-A^{-1}\beta\in\R^n
\end{equation}
is well defined. It is the interior equilibrium of the deterministic Lotka--Volterra system $\dot x=\Diag(x)(\beta+Ax)$, which need not be feasible.

\begin{definition}[Mean and invasion rates]\label{def:lambda}
	For a probability measure $\mu$ on $\R^n_+$ with finite first moment, let $\mu(X):=\int x\,\mu(\ud x)\in\R^n_+$ be its mean, and define the \emph{invasion rate} of species $i\in[n]$ with respect to $\mu$ by $\lambda_i(\mu):=\int\left(\beta_i+(Ax)_i\right)\mu(\ud x)$. By \cref{eq:def_m}, the vector $\lambda(\mu)=(\lambda_1(\mu),\dots,\lambda_n(\mu))$ is
	\begin{equation}\label{eq:def_lambda}
		\lambda(\mu)=\beta+A\mu(X)=A\left(\mu(X)-m\right).
	\end{equation}
\end{definition}

By \cref{lem:HNass-mom}, $\lambda(\mu)$ is defined for every invariant probability measure $\mu$ of $X$, and it coincides with the Lyapunov exponents of \citet{hening2018coexistence}, which in the notation of \cref{eq:HNform} read $\int\left(f_i-\frac12\sigma_{ii}g_i^2\right)\ud\mu$.

The next lemma is the Lotka--Volterra version of \citet[Lemma~2.1]{hening2018coexistence}, which is stated there for ergodic measures: a species that is present under a stationary regime has zero invasion rate. We give a short self-contained proof that applies directly to any invariant measure.

\begin{lemma}\label{lem:loglimit}
	Let $J\subseteq[n]$ and let $\mu$ be an invariant probability measure of $X$ supported on $\R^{J,\circ}_+$. Then $\lambda_i(\mu)=0$ for every $i\in J$.
\end{lemma}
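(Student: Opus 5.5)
Fix $i\in J$ and start the process from $X(0)\sim\mu$. Since $\mu(\R^{J,\circ}_+)=1$ and $\R^{J,\circ}_+$ is invariant (\cref{lem:HNass-wp}), $X_i(t)>0$ for all $t\ge0$ almost surely, so It\^o's formula applies to $\log X_i$:
\begin{equation*}
	\log X_i(t)-\log X_i(0)=\int_0^t\left(\beta_i+(AX(s))_i\right)\ud s+\sum_{j}\Gamma_{ij}B_j(t).
\end{equation*}
The plan is to take expectations under $\P_\mu$, divide by $t$, and let $t\to\infty$. The martingale term has mean zero. By stationarity and Fubini, using $\int\|x\|_1\,\mu(\ud x)<\infty$ from \cref{lem:HNass-mom}, the mean of the drift integral is exactly $t\lambda_i(\mu)$. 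The whole difficulty is that $\log X_i(0)$ need not be $\mu$-integrable, so one cannot simply say that the left-hand side has zero mean by stationarity.

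To get around this, I would work with a bounded truncation. Let $\phi_K(y)=\max(-K,\min(y,K))$ and set $Y(t)=\log X_i(t)$. By stationarity, $\phi_K(Y(t))$ and $\phi_K(Y(0))$ have the same law, so $\EXP_\mu[\phi_K(Y(t))-\phi_K(Y(0))]=0$ for every $K$ and every $t$.

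I would then compare this truncated difference with the untruncated one. Divide the identity above by $t$. Then $R_t:=\frac1t\int_0^t(\beta_i+(AX)_i)\ud s+\frac1t\sum_j\Gamma_{ij}B_j(t)$ satisfies the bound
\begin{equation*}
	\EXP_\mu|R_t|\le |\beta_i|+\|A\|_1\int\|x\|_1\,\mu(\ud x)+C t^{-1/2},
\end{equation*}
again by stationarity. This gives uniform integrability of $R_t$ for fixed $t$. Next I would use that $\phi_K$ is $1$-Lipschitz, so $|\phi_K(Y(t))-\phi_K(Y(0))|\le|Y(t)-Y(0)|=t|R_t|$.

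The key step is to let $K\to\infty$ for fixed $t$. For each $\omega$, $\phi_K(Y(t))-\phi_K(Y(0))\to Y(t)-Y(0)=tR_t$, since both $Y(t)$ and $Y(0)$ are finite almost surely. The integrable dominating function $t|R_t|$ then lets dominated convergence give $0=\EXP_\mu[tR_t]=t\lambda_i(\mu)$. This yields $\lambda_i(\mu)=0$ with no ergodicity and no limit $t\to\infty$ needed; taking $t=1$ suffices.

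I expect the main obstacle to be exactly this integrability point, namely avoiding any assumption that $\int|\log x_i|\,\mu(\ud x)<\infty$. The Lipschitz truncation handles it, because it only requires integrability of the increment $Y(t)-Y(0)$, not of $Y(0)$ itself. That increment is controlled by the first moment from \cref{lem:HNass-mom} together with the Gaussian increment, whose first moment is of order $\sqrt t$.
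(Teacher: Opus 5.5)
Your proof is correct, and it takes a genuinely different route from the paper's.

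\textbf{The paper's route.} The paper divides the It\^o identity by $t$ and lets $t\to\infty$. Birkhoff's ergodic theorem, applied to the stationary but not necessarily ergodic process, gives almost sure and $L^1$ convergence of the time average of $\beta_i+(AX)_i$ to a random limit $L$ with $\EXP_\mu L=\lambda_i(\mu)$. The strong law of large numbers removes the Brownian term. Tightness of the stationary law of $\log X_i(t)$ forces the left-hand side to $0$ in probability, so $L=0$.

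\textbf{Your route.} You work at a single fixed time instead. The clipped identity $\EXP_\mu[\phi_K(Y(t))-\phi_K(Y(0))]=0$ uses only equality of the one-time marginals. The $1$-Lipschitz bound, together with the first-moment estimate, makes $|Y(t)-Y(0)|$ an integrable dominating function. Dominated convergence as $K\to\infty$, followed by Fubini and the zero mean of the Gaussian term, then gives $t\lambda_i(\mu)=0$. This is the standard fact that an integrable increment of a stationary process has mean zero even when the process itself is not integrable. It needs no ergodic theorem, no strong law and no limit in $t$, and, like the paper's version, it covers non-ergodic $\mu$.

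\textbf{What each buys.} The paper's argument gives an additional pathwise statement: $\frac1t\int_0^t(\beta_i+(AX(s))_i)\ud s\to0$ holds $\P_\mu$-almost surely. In other words, the realized long-run growth rate of each present species vanishes almost surely, not just on average. This matches the reading of $\lambda_i$ as a Lyapunov exponent. For the lemma as stated, and for its use in \cref{prop:mean}, only the mean is needed, so your shorter argument suffices.

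A minor point: what you call uniform integrability of $R_t$ for fixed $t$ is just integrability of $tR_t=Y(t)-Y(0)$, and that is all you use.
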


\begin{proof}
	Let $X(0)\sim\mu$ so that $X$ is a stationary process under $\P_\mu$. By \cref{lem:HNass-wp}, $\R^{J,\circ}_+$ is invariant  and $\mu(\R^{J,\circ}_+)=1$. Then, for all $t\ge0$ and $\P_\mu$ almost surely, $X_i(t)>0$ if $i \in J$ while $X_i(t)=0$ if $i \notin J$. Now fix $i \in J$ and apply It\^o's formula to $\log X_i$,
	\begin{equation}\label{eq:dlogX}
		\ud\log X_i(t)=\left(\beta_i+(AX(t))_i\right)\ud t
		+\sum_{k=1}^n\Gamma_{ik}\,\ud B_k(t).
	\end{equation}
	Integrating,
	\begin{equation}\label{eq:loglimit-proof}
		\frac{\log X_i(t)-\log X_i(0)}{t}
		=\frac1t\int_0^t\left(\beta_i+(AX(s))_i\right)\ud s
		+\frac1t\sum_{k=1}^n\Gamma_{ik}B_k(t).
	\end{equation}
	The last term tends to $0$ almost surely by the strong law of large numbers for Brownian motion. The function $g(x):=\beta_i+(Ax)_i$ is $\mu$-integrable by \cref{lem:HNass-mom}. By Birkhoff's ergodic theorem for the stationary process $X$ under $\P_\mu$ \citep[Ch.~10]{kallenberg2002foundations}, the time average $\frac1t\int_0^tg(X(s))\ud s$ converges $\P_\mu$-a.s.\ and in $L^1(\P_\mu)$ to a random variable $L$ with $\EXP_\mu L=\int g\,\ud\mu =\lambda_i(\mu)$. Hence the left-hand side of \cref{eq:loglimit-proof} converges to $L$ almost surely.

	On the other hand, by stationarity, $\log X_i(t)$ has the same law as $\log X_i(0)$ for every $t$, and this is a finite random variable because $x_i>0$ for $\mu$-a.e.\ $x$. Hence, for every $\varepsilon>0$, as $t \to \infty$,
	\[
		\P_\mu\left(|\log X_i(t)|>\varepsilon t\right)
		=\mu\left\{x:\ |\log x_i|>\varepsilon t\right\}\longrightarrow0,
	\]
	so the left-hand side of \cref{eq:loglimit-proof} converges to $0$ in probability. Since it also converges to $L$ almost surely, and limits in probability are unique, $L=0$ almost surely. Therefore $\lambda_i(\mu)=\EXP_\mu L=0$.
\end{proof}

\Cref{lem:loglimit} turns the invasion rates of the present species into linear equations for the mean. To solve them, we introduce the equilibria of the subcommunities.

\begin{definition}[Face means]\label{def:mhat}
	For a nonempty $J\subseteq[n]$, let
	\begin{equation}\label{eq:def_mhat}
		\widehat m^{J}:=-(A^{J})^{-1}\beta^{J}\in\R^{|J|},
		\qquad
		m^{J}:=\ii_{J}\,\widehat m^{J}\in\R^n ,
	\end{equation}
	which are well defined because $A^J$ is invertible (\cref{rem:principal}). In particular $m^{[n]}=m$ and we set $m^\emptyset:=0$.
\end{definition}

For $n=3$ and ergodic measures, the following is \citet[Prop.~8.2]{hening2022classification}.

\begin{proposition}\label{prop:mean}
	Let $J\subseteq[n]$ and let $\mu$ be an invariant probability measure of $X$ supported on $\R^{J,\circ}_+$. Then $\mu(X)=m^{J}$ and $\widehat m^{J}>0$.
\end{proposition}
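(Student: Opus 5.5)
The plan is to combine \cref{lem:loglimit} with the affine formula \cref{eq:def_lambda} and then solve the resulting linear system on the face. First I dispose of the trivial case $J=\emptyset$. Then $\R^{\emptyset,\circ}_+=\{0\}$, so $\mu=\delta_0$ and $\mu(X)=0=m^\emptyset$. The positivity statement $\widehat m^\emptyset>0$ is vacuous. From now on I take $J\neq\emptyset$.

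Let $y:=\mu(X)$, which is finite by \cref{lem:HNass-mom}. Since $\mu$ is supported on $\R^{J,\circ}_+$, every coordinate $x_i$ with $i\notin J$ vanishes $\mu$-a.s. Hence $y_i=0$ for $i\notin J$, that is, $y=\ii_J\widehat y$ with $\widehat y:=\pp_J y$. Also $x_i>0$ $\mu$-a.s.\ for $i\in J$, so $\widehat y_i=\int x_i\,\mu(\ud x)>0$; a nonnegative random variable that is a.s.\ positive has positive expectation. This step gives the positivity part once I identify $\widehat y$ with $\widehat m^J$.

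For the identification, \cref{lem:loglimit} gives $\lambda_i(\mu)=0$ for all $i\in J$. By \cref{eq:def_lambda}, $\lambda(\mu)=\beta+Ay$. Projecting onto $J$ and using $y=\ii_J\widehat y$ gives
\[
0=\pp_J\left(\beta+A\,\ii_J\widehat y\right)=\beta^J+A^J\widehat y .
\]
Since $A^J$ is invertible by \cref{rem:principal}, $\widehat y=-(A^J)^{-1}\beta^J=\widehat m^J$. Hence $\mu(X)=\ii_J\widehat m^J=m^J$, and $\widehat m^J=\widehat y>0$ by the previous paragraph.

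I do not expect a real obstacle here. The only points needing care are the use of \cref{lem:HNass-mom} to guarantee that the mean and the invasion rates are finite, and the observation that only the rows in $J$ of $\lambda(\mu)$ are known to vanish. The rows outside $J$ are unconstrained, which is exactly why the zero-padding structure $y=\ii_J\widehat y$ together with the principal submatrix $A^J$ is what makes the system square and solvable.
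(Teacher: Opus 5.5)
Your proposal is correct and follows the paper's proof essentially step for step. You dispose of $J=\emptyset$, use \cref{lem:HNass-mom} for a finite mean, and zero-pad the mean outside $J$. You then use \cref{lem:loglimit} to get $\beta^J+A^J\mu(X)^J=0$, invert $A^J$ via \cref{rem:principal}, and read off positivity from $x_i>0$ $\mu$-a.s.\ for $i\in J$.
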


\begin{proof}
	If $J=\emptyset$, then $\mu=\delta_0$ and all assertions are immediate. Let $J\neq\emptyset$. By \cref{lem:HNass-mom}, $\mu(X)$ is finite, and since $\mu$ is supported on $\R^{J,\circ}_+$, $\mu(X)_i=0$ for $i\notin J$, that is, $\mu(X)=\ii_J\,\mu(X)^J$. By \cref{lem:loglimit}, $\beta_i+(A\mu(X))_i=0$ for every $i\in J$, which reads
	\[
		\beta^{J}+A^{J}\mu(X)^{J}=0 .
	\]
	Since $A^J$ is invertible (\cref{rem:principal}), $\mu(X)^{J}=-(A^{J})^{-1}\beta^{J}=\widehat m^{J}$, and hence $\mu(X)=\ii_J\widehat m^J=m^{J}$. Each component of $\widehat m^{J}=\mu(X)^J$ is positive, because $x_i>0$ for $\mu$-a.e.\ $x$ and every $i\in J$.
\end{proof}

For $J=[n]$, \cref{prop:mean} already gives one direction of our main result, without any use of the persistence theory.

\begin{corollary}\label{cor:necessity}
	If $X$ has an invariant probability measure $\pi$ with $\pi(\R^{n,\circ}_+)=1$, then its mean $\pi(X)=m$ is given by \cref{eq:def_m}, and $m>0$.
\end{corollary}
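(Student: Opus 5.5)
The corollary is the special case $J=[n]$ of \cref{prop:mean}, so the plan is simply to specialize that proposition and unpack the notation. Let $\pi$ be an invariant probability measure with $\pi(\R^{n,\circ}_+)=1$. Since $\R^{[n],\circ}_+=\R^{n,\circ}_+$ by \cref{eq:faces}, $\pi$ is supported on the face $\R^{J,\circ}_+$ with $J=[n]$. This is exactly the hypothesis of \cref{prop:mean}. The finiteness of the mean needed there is supplied by \cref{lem:HNass-mom}, and the zero invasion rates of all species are supplied by \cref{lem:loglimit}.

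Applying \cref{prop:mean} gives $\pi(X)=m^{[n]}$ and $\widehat m^{[n]}>0$. For $J=[n]$, both $\pp_{[n]}$ and $\ii_{[n]}$ are the identity and $A^{[n]}=A$. Hence $\widehat m^{[n]}=m^{[n]}=-A^{-1}\beta=m$, as already noted after \cref{def:mhat}. Therefore $\pi(X)=m$ and $m>0$.

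There is no real obstacle. The one point to check is that \cref{prop:mean} was stated for arbitrary invariant measures, not only ergodic ones, so no ergodic decomposition is needed. This is the case, because \cref{lem:loglimit} was proved for any invariant measure via Birkhoff's theorem and the uniqueness of limits in probability.
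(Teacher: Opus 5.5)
Your proposal is correct and matches the paper, which obtains the corollary by specializing \cref{prop:mean} to $J=[n]$, where $m^{[n]}=\widehat m^{[n]}=m$. Your remark that ergodicity is not needed also agrees with the paper, since \cref{lem:loglimit} is proved there for arbitrary invariant measures.
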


\section{Main result}\label{sec:main}

The sufficiency part of our main result rests on the persistence theorem of \citet{hening2018coexistence}, which is formulated in terms of the invariant measures that describe the possible stationary regimes with at least one species absent. Let
\begin{equation}
	\mathcal M:=\left\{\mu:\ \mu\ \text{is an ergodic invariant probability
		measure of } X \text{ with } \mu(\partial\R^n_+)=1\right\}.
\end{equation}
Note that, since the origin is absorbing for \cref{eq:SDE}, the point mass $\delta_0$ is invariant and, being a point mass, ergodic; hence $\delta_0\in\mathcal M$ always. Also, by \cref{lem:HNass-mom}, every $\mu\in\mathcal M$ has a finite first moment, so that the invasion rate $\lambda(\mu)$ is defined by \cref{eq:def_lambda}. 

We now state the persistence theorem of \citet{hening2018coexistence} in the weighted form that we use. It applies to our setting because \cref{eq:SDE} satisfies their Assumption~1.1 by \cref{lem:HNass-wp}.

\begin{theorem}[{\citealp[Theorem~1.1 and Sect.~4]{hening2018coexistence}}]\label{thm:HN}
	If there is $p\in\R^n$ with $p>0$ such that
	\begin{equation}\label{cond:HN}
		\inf_{\mu\in\mathcal M}\,p^\top\lambda(\mu)>0,
	\end{equation}
	then $X$ is \emph{strongly stochastically persistent}: there is a unique invariant probability measure $\pi$ with $\pi(\R^{n,\circ}_+)=1$, and the transition probabilities of $X$ converge to $\pi$ in total variation, exponentially fast, from every initial condition $x\in\R^{n,\circ}_+$:
	\begin{equation}\label{eq:HN-TV}
		\lim_{t\to\infty}\left\|\P_x\left(X(t)\in\cdot\,\right)-\pi\right\|_{\mathrm{TV}}=0 .
	\end{equation}
\end{theorem}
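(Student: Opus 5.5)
The plan is to reduce the weighted condition \cref{cond:HN} to the standing hypothesis of \citet[Theorem~1.1]{hening2018coexistence} and then read off the conclusions from their results. Their Assumption~1.1 holds by \cref{lem:HNass-wp}. Their persistence hypothesis (Assumption~1.2 there) asks that
\[
\max_{i}\lambda_i(\nu)>0 \quad\text{for every } \nu\in\operatorname{Conv}(\mathcal M).
\]
Here $\lambda_i(\nu)=\int\bigl(f_i-\tfrac12\sigma_{ii}g_i^2\bigr)\ud\nu$ are their Lyapunov exponents, which coincide with our invasion rates by \cref{def:lambda} and the remark following it.

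The first step is to note that $\nu\mapsto\lambda(\nu)$ is affine on probability measures with finite first moment. Indeed, by \cref{eq:def_lambda} it depends on $\nu$ only through the mean $\nu(X)$, and the mean is linear under convex combinations. Every $\mu\in\mathcal M$ has a finite first moment by \cref{lem:HNass-mom}, so the same holds for finite convex combinations of such measures.

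Now let $c:=\inf_{\mu\in\mathcal M}p^\top\lambda(\mu)>0$, and take $\nu=\sum_k t_k\mu_k\in\operatorname{Conv}(\mathcal M)$ with $t_k\ge0$ and $\sum_k t_k=1$. By affinity,
\[
p^\top\lambda(\nu)=\sum_k t_k\,p^\top\lambda(\mu_k)\ge c>0 .
\]
Since $p>0$, a positive combination $\sum_ip_i\lambda_i(\nu)$ can only be positive if some $\lambda_i(\nu)>0$. Hence $\max_i\lambda_i(\nu)>0$, which is their Assumption~1.2. If their convex hull is understood as closed, the uniform lower bound $c$ also passes to the relevant limits. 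This is because $p^\top\lambda(\cdot)$ is continuous along the weakly convergent sequences they consider, using the uniform moment bound of \citet[Lemma~3.3]{hening2018coexistence} behind \cref{lem:HNass-mom}. This is exactly the equivalence with the weighted form observed by \citet[Lemma~4]{schreiber2011persistence}.

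With both assumptions verified, \citet[Theorem~1.1]{hening2018coexistence} gives the following conclusions:
\begin{itemize}
\item there is a unique invariant probability measure $\pi$ on $\R^{n,\circ}_+$;
\item $\pi$ attracts in total variation every interior initial condition, which is \cref{eq:HN-TV}.
\end{itemize}
The exponential rate is the content of their Sect.~4. There, under the same hypotheses, they construct a Lyapunov-type function $V$ with
\[
\EXP_xV(X(T))\le\rho V(x)+C,\qquad \rho<1,
\]
together with a minorization on compact subsets of $\R^{n,\circ}_+$, which holds by nondegeneracy of $\Sigma$. The Harris-type argument then yields geometric ergodicity.

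The main obstacle is bookkeeping rather than mathematics. One has to check that their Sect.~4 exponential-rate statement is formulated under the same persistence hypothesis that we verified, rather than under the stronger weighted one. If it needs the weighted form directly, as in their construction of $V$ with weights $p_i$, then \cref{cond:HN} is exactly what is supplied.
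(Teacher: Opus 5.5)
Your proposal is correct and follows the paper's own justification. Assumption~1.1 of \citet{hening2018coexistence} comes from \cref{lem:HNass-wp}, and \cref{cond:HN} yields their Assumption~1.2 because $\lambda$ is linear in the measure: $p^\top\lambda(\nu)=\sum_k\theta_k\,p^\top\lambda(\mu_k)>0$ forces some $\lambda_i(\nu)>0$, after which their Theorem~1.1 is invoked. Your extra remarks on closed convex hulls and on the Lyapunov/Harris mechanism behind the exponential rate are not needed for this reduction, but they do no harm.
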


Theorem~1.1 of \citet{hening2018coexistence} is stated under the condition that $\max_i\lambda_i(\nu)>0$ for every finite convex combination $\nu=\sum_k\theta_k\mu_k$ of elements of $\mathcal M$ (their Assumption~1.2). \Cref{cond:HN} implies it: since \cref{eq:def_lambda} is linear in the measure, $p^\top\lambda(\nu)=\sum_k\theta_k\,p^\top\lambda(\mu_k)>0$, so $\lambda_i(\nu)>0$ for some $i$. Conversely, the two conditions are equivalent by a minimax argument \citep[Lemma~4]{schreiber2011persistence}; see also \citet[Sect.~4]{hening2018coexistence} and \citet[Remark~2.7]{hening2021general}. In either form, \cref{thm:HN} gives a sufficient condition only. In what remains of this section we show that, in our setting, \cref{cond:HN} holds with $p=m$ whenever $m>0$. Together with \cref{cor:necessity}, this completes our characterization.

The following lemma is the key step. It shows that every boundary regime lives on a single face of the orthant, and that its invasion rates, weighted by $m$, form a quadratic form in the distance between its mean and $m$.

\begin{lemma}\label{lem:weighted}
	For every $\mu\in\mathcal M$ there is a unique $J(\mu)\subsetneq[n]$ such that $\mu$ is supported on $\R^{J(\mu),\circ}_+$. Moreover, $\widehat m^{J(\mu)}>0$ and
	\begin{equation}\label{eq:weighted-mu}
		m^\top\lambda(\mu)=-\left(m^{J(\mu)}-m\right)^\top A\left(m^{J(\mu)}-m\right) .
	\end{equation}
\end{lemma}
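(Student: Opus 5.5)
The proof splits into two parts: first localize an ergodic boundary measure on a single face, then compute its weighted invasion rate using \cref{prop:mean}.

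\emph{Localization.} Let $\mu\in\mathcal M$. The sets $\R^{J,\circ}_+$, $J\subseteq[n]$, are Borel, partition $\R^n_+$, and are invariant by \cref{lem:HNass-wp}. By ergodicity, $\mu(\R^{J,\circ}_+)\in\{0,1\}$ for each $J$. Since these sets are finitely many and their $\mu$-measures sum to $1$, exactly one of them has measure $1$; call it $J(\mu)$. This gives uniqueness, because the sets are disjoint. Since $\mu(\partial\R^n_+)=1$ and $\R^{n,\circ}_+$ is disjoint from the boundary, $J(\mu)\neq[n]$. Applying \cref{prop:mean} with $J=J(\mu)$ then gives $\mu(X)=m^{J}$ and $\widehat m^{J}>0$. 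In the case $J=\emptyset$, we get $\mu=\delta_0$ and $m^\emptyset=0$; here the positivity statement is vacuous.

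\emph{The identity.} Write $J=J(\mu)$ and $v:=m^J-m$. By \cref{eq:def_lambda}, $\lambda(\mu)=Av$, so $m^\top\lambda(\mu)=m^\top Av$. The key observation is $(m^J)^\top Av=0$. To see this, note that $m^J$ vanishes outside $J$, so $(m^J)^\top Av=\sum_{i\in J}(m^J)_i\,\lambda_i(\mu)$. By \cref{lem:loglimit}, $\lambda_i(\mu)=0$ for $i\in J$. Equivalently, for $i\in J$ we have $(Am^J)_i=(A^J\widehat m^J)_i=-\beta_i$ and $(Am)_i=-\beta_i$, so $(Av)_i=0$. Therefore
\[
m^\top Av=(m^J-v)^\top Av=-v^\top Av,
\]
which is \cref{eq:weighted-mu}.

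There is no real obstacle. The only delicate point is the localization step: ergodicity must be applied to the invariant face interiors rather than to the closed faces, and the empty face must be handled separately.
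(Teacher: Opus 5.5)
Your proof is correct and follows the paper's argument essentially step by step. You localize $\mu$ by applying ergodicity to the invariant partition into relative face interiors, then use \cref{prop:mean}, and finally use the orthogonality $(m^J)^\top A(m^J-m)=0$, which comes from $(A(m^J-m))_i=0$ for $i\in J$; the only cosmetic difference is that you partition all of $\R^n_+$ and then exclude $J=[n]$, whereas the paper partitions $\partial\R^n_+$ directly.
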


\begin{proof}
	By \cref{lem:HNass-wp}, the sets $\R^{J,\circ}_+$, $J\subsetneq[n]$, form a partition of $\partial\R^n_+$ into invariant sets. Since $\mu$ is ergodic, $\mu(\R^{J,\circ}_+)\in\{0,1\}$ for every $J\subsetneq[n]$, and since $\mu(\partial\R^n_+)=1$, exactly one of these sets has $\mu$-measure one; this defines $J(\mu)$. For instance, $J(\delta_0)=\emptyset$. \Cref{prop:mean}, applied with $J=J(\mu)$, gives $\mu(X)=m^{J(\mu)}$ and $\widehat m^{J(\mu)}>0$, so that, by \cref{eq:def_lambda},
	\begin{equation}\label{eq:prop-lambda-mu}
		\lambda(\mu)=A\left(m^{J(\mu)}-m\right)=\beta+Am^{J(\mu)} .
	\end{equation}

	For the last identity, let $J\subseteq[n]$ and write $\Delta^J:=m^J-m$. By \cref{eq:def_m}, $\beta=-Am$, so $m^\top(\beta+Am^J)=m^\top A\Delta^J$. By \cref{eq:def_mhat}, $(Am^J)_i=(A^J\widehat m^J)_i=-\beta_i$ for $i\in J$, so $(A\Delta^J)_i=(Am^J)_i-(Am)_i=-\beta_i+\beta_i=0$ for $i\in J$, while $m^J_i=0$ for $i\notin J$. Hence $(m^J)^\top A\Delta^J=0$, and
	\[
		m^\top\left(\beta+Am^J\right)=\left(m-m^J\right)^\top A\Delta^J
		=-\left(\Delta^J\right)^\top A\,\Delta^J .
	\]
	Taking $J=J(\mu)$ and using \cref{eq:prop-lambda-mu} gives \cref{eq:weighted-mu}.
\end{proof}

We can now state and prove our main result.

\begin{theorem}\label{thm:main}
	Assume \cref{ass:VL}, with any $D$ as in \cref{eq:VL}, and let $m=-A^{-1}\beta$. Then $X$ has an invariant probability measure $\pi$ with $\pi(\R^{n,\circ}_+)=1$ if and only if $m>0$. In that case, $\pi$ is the unique invariant probability measure with $\pi(\R^{n,\circ}_+)=1$, it is ergodic, its mean is $\pi(X)=m$, and the transition probabilities of $X$ converge to $\pi$ in total variation, exponentially fast, from every $x\in\R^{n,\circ}_+$.
\end{theorem}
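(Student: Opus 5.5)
The proof begins by reducing to the case $D=I$. Set $\tilde X=DX$, which solves \cref{eq:SDE} with the same $\alpha,\Gamma$ (hence the same $\Sigma$ and $\beta$) and with $\tilde A=AD^{-1}$ satisfying \cref{eq:VL-reduced}. The corresponding vector is $\tilde m=-\tilde A^{-1}\beta=-DA^{-1}\beta=Dm$. Since $d>0$, we have $\tilde m>0$ if and only if $m>0$. The map $x\mapsto Dx$ is a linear bijection of $\R^{n,\circ}_+$ onto itself, so it transports invariant measures and interior supports in both directions. It also preserves uniqueness, ergodicity and total variation distances, and the mean of the pushforward is $D$ times the mean. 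Hence it suffices to prove the theorem under \cref{eq:assA}, with $\pi(X)=m$ translating to $\tilde\pi(X)=Dm=\tilde m$.

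Under \cref{eq:assA}, necessity together with the mean formula is exactly \cref{cor:necessity}: any interior invariant $\pi$ has $\pi(X)=m$ and $m>0$.

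For sufficiency, assume $m>0$ and apply \cref{thm:HN} with $p=m$. Take $\mu\in\mathcal M$. By \cref{lem:weighted},
\[
m^\top\lambda(\mu)=-(m^{J(\mu)}-m)^\top A(m^{J(\mu)}-m),
\]
where $J(\mu)\subsetneq[n]$. Choose $i\notin J(\mu)$. Then $m^{J(\mu)}_i=0\neq m_i$, so $m^{J(\mu)}-m\neq0$, and \cref{eq:negdef} makes the right-hand side strictly positive. This quantity depends on $\mu$ only through $J(\mu)$, which ranges over finitely many proper subsets. Therefore
\[
\inf_{\mu\in\mathcal M}m^\top\lambda(\mu)\ \ge\ \min_{J\subsetneq[n]}\bigl(-(m^J-m)^\top A(m^J-m)\bigr)>0 .
\]
This verifies \cref{cond:HN}, and \cref{thm:HN} then gives a unique interior invariant $\pi$ together with convergence \cref{eq:HN-TV} in total variation, exponentially fast. \Cref{cor:necessity} gives $\pi(X)=m$.

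Ergodicity follows from uniqueness. If $S$ were an invariant Borel set with $0<\pi(S)<1$, then normalized restrictions of $\pi$ to $S$ and to $S^c$ would be distinct invariant measures, both interior since $\pi$ is. This contradicts uniqueness. Alternatively, ergodicity follows directly from the total variation convergence.

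The only delicate point is that the exponential rate and the ergodicity are not stated verbatim in \cref{thm:HN} as quoted. I would cite the "exponentially fast" clause already included there, and handle ergodicity by the extremality argument above. Everything else is direct assembly of \cref{cor:necessity}, \cref{lem:weighted} and \cref{thm:HN}.
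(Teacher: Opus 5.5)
Your proposal is correct and follows the paper's proof essentially step for step. You reduce to $D=I$ via $\tilde X=DX$ with $\tilde m=Dm$, obtain necessity and the mean from \cref{cor:necessity}, verify \cref{cond:HN} with $p=m$ through \cref{lem:weighted} and the finite minimum over proper faces, and deduce ergodicity from uniqueness by normalizing the restriction of $\pi$ to an invariant set.
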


\begin{proof}
	Let $\tilde X=DX$ and $\tilde A=AD^{-1}$ be as in \cref{eq:VL-reduced}. Since $d_i>0$, the map $x\mapsto Dx$ is a bijection of $\R^n_+$ onto itself that preserves $\R^{n,\circ}_+$ and maps invariant sets of $X$ to invariant sets of $\tilde X$. Hence $\pi\mapsto\pi\circ D^{-1}$ is a bijection from the invariant probability measures of $X$ onto those of $\tilde X$ that preserves ergodicity and full measure on $\R^{n,\circ}_+$; it also preserves total variation distances between laws, so the convergence statement transfers, with the same rate. Since $\alpha$ and $\Gamma$ are unchanged, so is $\beta$, and $\tilde m=-\tilde A^{-1}\beta=-DA^{-1}\beta=Dm$. Thus $\tilde m>0$ if and only if $m>0$, and $\pi$ has mean $m$ if and only if $\pi\circ D^{-1}$ has mean $\tilde m$. Every assertion of the theorem for $X$ is therefore equivalent to the same assertion for $\tilde X$, and, by \cref{eq:VL-reduced}, we may assume \cref{eq:assA}.

	If $\pi$ exists, then $m=\pi(X)>0$ by \cref{cor:necessity} and necessity follows. For sufficiency, assume $m>0$. Let $\mu\in\mathcal M$ and let $J=J(\mu)\subsetneq[n]$ be the face of its support. Denote $\Delta^J:=m^J-m$. Choosing $i\notin J$ gives $\Delta^J_i=-m_i<0$, so $\Delta^J\neq0$, and by \cref{lem:weighted}, $m^\top\lambda(\mu)=-(\Delta^J)^\top A\Delta^J>0$, because $A$ is negative definite by \cref{eq:negdef}. Since $\Delta^J\neq0$ for every $J\subsetneq[n]$ and there are finitely many such $J$, the same negative definiteness gives
	\begin{equation}\label{eq:punch}
		\inf_{\mu\in\mathcal M}m^\top\lambda(\mu)
		\ \ge\ \min_{J\subsetneq[n]}\left(-\left(\Delta^J\right)^\top A\,\Delta^J\right)>0 ,
	\end{equation}
	that is, \cref{cond:HN} holds with $p=m$, and \cref{thm:HN} gives a unique invariant probability measure $\pi$ with $\pi(\R^{n,\circ}_+)=1$, which attracts every $x\in\R^{n,\circ}_+$ exponentially fast in total variation. Moreover, $\pi$ is ergodic. Indeed, if $B$ were an invariant Borel set with $0<\pi(B)<1$, then $\pi(\cdot\cap B)/\pi(B)$ would be an invariant probability measure with full measure on $\R^{n,\circ}_+$ and different from $\pi$, contradicting uniqueness. Finally, $\pi(X)=m$ by \cref{cor:necessity}.
\end{proof}

\section{Discussion}\label{sec:discussion}

We have shown that, for \cref{eq:SDE} with a Volterra--Lyapunov stable interaction matrix and nondegenerate noise, invariant coexistence is equivalent to $m=-A^{-1}\beta>0$, the feasibility criterion for the deterministic system $\dot x=\Diag(x)(\beta+Ax)$. The criterion is explicit: it requires solving one linear system and inspecting the signs of its solution. Both directions exploit the same key feature of \cref{eq:SDE}: the per-capita drift $\alpha+Ax$ is affine in $x$ and the covariance $\Sigma$ of the per-capita noise does not depend on $x$, so that $\lambda(\mu)=A(\mu(X)-m)$ depends on $\mu$ only through its mean (\cref{eq:def_lambda}). The necessity argument (\cref{lem:loglimit,prop:mean}) uses only well-posedness, finite first moments and the invertibility of $A$, so $m>0$ is necessary for invariant coexistence whenever these hold, even without \cref{ass:VL-b}. In the rest of this section we discuss the scope of \cref{ass:VL} (\cref{sec:disc-scope}), the role of environmental noise (\cref{sec:disc-jensen}), the relation of \cref{eq:punch} to Hofbauer's criterion and to the Lyapunov function of Volterra and Goh (\cref{sec:disc-weights}), and subcommunities (\cref{sec:disc-sub}).

\subsection{Scope of \cref{ass:VL}}\label{sec:disc-scope}

\Cref{ass:VL-b} covers a wide family of ecological models, provided that the self-limitation ($A_{ii}<0$) of every species is strong relative to the interspecific effects. It does not prescribe the type of interactions, but it includes many predator--prey systems, which are the main motivation of this work.

In a predator--prey community, each pair of species $i\neq j$ either does not interact, $A_{ij}=A_{ji}=0$, or interacts trophically, $A_{ij}A_{ji}<0$; we then call $A$ a predator--prey matrix, and we call it self-limited if, in addition, $A_{ii}<0$ for all $i$, which models intraspecific competition. Such a matrix defines a predator--prey web: the directed graph on $[n]$ with an edge $i\to j$ whenever $A_{ij}>0>A_{ji}$, meaning that species $i$ preys on species $j$. \Cref{ass:VL-b} does not include all self-limited predator--prey matrices, and it also covers interactions other than predation. We now give specific conditions under which it holds.

\smallskip\noindent\emph{(i) Conservative predator--prey webs.} Following \citet{volterra1931lecons}, we say that $A$ is \emph{conservative} if there is $d\in\R^{n,\circ}_+$ such that
\begin{equation}\label{eq:conservative}
	d_iA_{ij}+d_jA_{ji}=0\qquad\text{for all } i\neq j ,
\end{equation}
that is, if the off-diagonal part of $\Diag(d)A$ is skew-symmetric. Since $d>0$, \cref{eq:conservative} forces $A$ to be a predator--prey matrix. If $A$ is conservative and self-limited, then \cref{ass:VL-b} holds with $D=\Diag(d)$, because
\[
	DA+A^\top D=2\Diag(d_1A_{11},\dots,d_nA_{nn})\prec0 .
\]
Conservativeness is a condition on the topology of the web and on the ratios of the interaction coefficients, not on their size. Indeed, \cref{eq:conservative} prescribes $d_j/d_i=-A_{ij}/A_{ji}$ along every edge, so a predator--prey matrix is conservative if and only if
\begin{equation}\label{eq:cycle}
	\prod_{k=1}^{\ell}\left(-\frac{A_{i_ki_{k+1}}}{A_{i_{k+1}i_k}}\right)=1
\end{equation}
around every cycle $i_1,i_2,\dots,i_\ell,i_{\ell+1}=i_1$ of the underlying undirected graph of the web \citep[Prop.~3.2]{fernandes1995hamiltonian}. In particular, every predator--prey matrix whose web is a forest, such as a food chain or a tree-shaped food web, is conservative \citep{solimano1982graph}. Hence every self-limited predator--prey matrix on a forest satisfies \cref{ass:VL-b}, however strong its interactions. Webs with cycles, such as webs with omnivory, also satisfy the assumption when every cycle is balanced in the sense of \cref{eq:cycle}.

\smallskip\noindent\emph{(ii) Predator--prey webs that are not conservative.} If some cycle is unbalanced, a self-limited predator--prey matrix is not conservative, but it may still satisfy \cref{ass:VL-b} when self-limitation is strong enough. For instance, for the omnivory web $1\to2$, $1\to3$, $2\to3$, the matrix
\begin{equation}\label{eq:omni-example}
	A=\begin{pmatrix}-10&1&1\\-1&-10&2\\-1&-1&-10\end{pmatrix}
	\quad\text{satisfies}\quad
	A+A^\top=\begin{pmatrix}-20&0&0\\0&-20&1\\0&1&-20\end{pmatrix}\prec0 ,
\end{equation}
so \cref{eq:assA} holds with $d=\mathbf 1$. However, $A$ is not conservative: the product in \cref{eq:cycle} around the cycle $1,2,3,1$ equals $1\cdot2\cdot1=2$. Without sufficient self-limitation \cref{ass:VL-b} may fail: with the same off-diagonal entries and $A_{ii}=-1/10$, the matrix $A$ has a pair of eigenvalues with positive real part, whereas \cref{eq:assA} forces $A$ to be Hurwitz (see below).

\smallskip\noindent\emph{(iii) Other types of interaction.} \Cref{ass:VL-b} does not require predator--prey relationships. For instance, $A+A^\top\prec0$ holds whenever self-limitation dominates the interactions in the sense that $2|A_{ii}|>\sum_{j\neq i}\left(|A_{ij}|+|A_{ji}|\right)$ for every $i$, by Gershgorin's theorem applied to $A+A^\top$. This allows weak competition ($A_{ij},A_{ji}<0$) and weak mutualism ($A_{ij},A_{ji}>0$), which are never conservative because of their signs. Two cases can be described exactly. For two species, \cref{ass:VL-b} holds if and only if $A_{11},A_{22}<0$ and $\det A>0$, as follows by choosing $D=\Diag(1,t)$ with a suitable $t>0$. In larger communities the same inequality is necessary for every pair of species: for $J=\{i,j\}$, $\sign(\det A^J)=(-1)^{|J|}$ (\cref{rem:principal}) gives $\det A^J=A_{ii}A_{jj}-A_{ij}A_{ji}>0$. This excludes interspecific competition or mutualism that is strong compared with intraspecific competition. For cooperative communities, in which $A_{ij}\ge0$ for all $i\neq j$, \cref{eq:assA} holds if and only if $A$ is Hurwitz, that is, if and only if $-A$ is a nonsingular M-matrix \citep{berman1994nonnegative}.

Under \cref{ass:VL}, $V(x)=x^\top Dx$ is a strict Lyapunov function for $\dot x=Ax$, so $A$ is Hurwitz, and so is every $A^J$. Much more is true: by \citet{takeuchi1980existence}, \cref{ass:VL} implies that the deterministic Lotka--Volterra system $\dot x=\Diag(x)(\alpha+Ax)$ has, for every $\alpha\in\R^n$, a unique saturated equilibrium, which is globally stable. Hence models whose deterministic dynamics exhibit alternative stable states, sustained oscillations or heteroclinic cycles lie outside our scope. Neither the P-matrix property nor the Hurwitz property implies \cref{ass:VL}: the matrix $\left(\begin{smallmatrix}1&-2\\2&-2\end{smallmatrix}\right)$ is Hurwitz but
has a positive diagonal entry, and the omnivory matrix \cref{eq:omni-example} with $A_{ii}=-1/10$ has $-A$ a P-matrix but is not Hurwitz.

\subsection{Environmental noise acts only through the It\^o shift}\label{sec:disc-jensen}

We have shown that invariant coexistence is completely determined by the vector $m = -A^{-1}\beta$. Hence environmental noise affects invariant coexistence, and the stationary means, only through the It\^o shift $\alpha_i\mapsto\beta_i=\alpha_i-\frac12\Sigma_{ii}$. In particular, the off-diagonal entries of $\Sigma$, which describe correlations between the environmental responses of different species, enter neither the criterion nor the means; they affect only the shape of the stationary distribution.


This ``variance reduces long-run growth'' effect can be seen as a consequence of Jensen's inequality \citep{jensen1906fonctions}, whose implications for populations in fluctuating environments are reviewed by \citet{ruel1999jensen} and \citet{denny2017fallacy}. It takes the form of the shift $\alpha\mapsto\beta$ and acts on each population separately: under multiplicative fluctuations, long-run growth is governed by the average of the logarithmic growth rate, which is smaller than the logarithm of the average growth rate; for white noise this reduction is exactly the It\^o correction $\frac12\Sigma_{ii}$ in \cref{eq:dlogX}. These facts have been reported in the literature since the early works on randomly fluctuating population dynamics \citep[see][]{lewontin1969population,turelli1977random}. What is significant about the stochastic Lotka--Volterra model is that this is the only Jensen effect on invariant coexistence and on the stationary means. In the language of the coexistence theory of \citet{chesson1994multispecies,chesson2000mechanisms}, the community-level, fluctuation-dependent mechanisms that may promote or prevent coexistence are absent precisely because of the key feature of \cref{eq:SDE} described above: an affine per-capita drift rules out relative nonlinearity, and a noise term that does not depend on the densities makes the responses to the environment and to competition additive, which rules out the storage effect. Indeed, by \cref{eq:def_lambda}, $\lambda(\mu)=\EXP_{\mu} (\beta + AX) =\beta+A\,\EXP_{\mu} X$ for every $\mu$: Jensen's inequality holds with equality, so the invasion rates, and hence the criterion for invariant coexistence, are those of the deterministic system $\dot x=\Diag(x)(\beta+Ax)$ evaluated at the means.

\subsection{Relations to Hofbauer's criterion and the Lyapunov function of Volterra and Goh}\label{sec:disc-weights}

The inequality at the heart of the proof of \cref{thm:main} is \cref{eq:punch}: if $m>0$, then $m^\top\lambda(\mu)$ is bounded below by a positive constant over $\mathcal M$, that is, when weighted by $m$, the average of the invasion rates is positive, uniformly, in every stationary regime on the boundary.

We now argue that \cref{eq:punch} is the stochastic form of the average Lyapunov function criterion of \citet{hofbauer1981general}. For $q>0$, the function $Q(x)=\prod_ix_i^{q_i}$ vanishes exactly on $\partial\R^n_+$, and by \cref{eq:dlogX} the solution of \cref{eq:SDE} satisfies \begin{equation}\label{eq:sdelogQ}
	\ud\log Q(X)=q^\top(\beta+AX)\ud t+q^\top\Gamma\ud B .
\end{equation}
Take $q=m$. The average of the drift $m^\top(\beta+AX)$ with respect to a boundary regime $\mu\in\mathcal M$ is $m^\top\lambda(\mu)=\sum_{i\notin J(\mu)}m_i\lambda_i(\mu)$, because the species present in $\mu$ have zero invasion rates (\cref{lem:loglimit}); it is a weighted average of the invasion rates of the missing species. Since this average is positive by \cref{eq:punch}, while the community stays close to the regime $\mu$ (with the species outside $J(\mu)$ rare), the ergodic theorem makes the drift average out to a positive number over long times, and $\log Q(X(t))$ increases. The log-densities of the resident species do not drift, so this increase means that some missing species invades, and the process is pushed away from $\partial\R^n_+$. Since $m^\top\lambda$ is linear in the measure and the weights are fixed, the same holds when the community moves between several boundary regimes, and the uniform lower bound in \cref{eq:punch} makes this robust. The proof of \cref{thm:HN} in \citet{hening2018coexistence} makes this heuristic rigorous.

In the deterministic setting, the corresponding statement is Hofbauer's criterion \citetext{\citealp[Thm.~1]{hofbauer1981general}; \citealp[Thm.~12.2.1]{hofbauer1998evolutionary}}: a dissipative system $\dot x_i=x_if_i(x)$ is permanent if, for some $q>0$, the time average of $\sum_iq_if_i$ along every orbit in $\partial\R^n_+$ is eventually positive. We apply it to the deterministic Lotka--Volterra system with the noise-corrected growth rates,
\begin{equation}\label{eq:JansenODE}
	\dot x=\Diag(x)(\beta+Ax),
\end{equation}
which is dissipative because $A$ is negative definite by \cref{eq:negdef}. For dissipative Lotka--Volterra systems, Hofbauer's criterion needs to be checked only at the boundary equilibria: the system is permanent if there is $q>0$ such that $q^\top(\beta+A x^*)>0$ for every equilibrium $x^*\in\partial\R^n_+$. See \citet[][Theorem 3]{jansen1987permanence} and \citet[Exercise~13.6.3]{hofbauer1998evolutionary}. The boundary equilibria of \cref{eq:JansenODE} are the vectors $x^* = m^J$ with $J\subsetneq[n]$ and $\widehat m^J>0$. By \cref{eq:prop-lambda-mu}, the invasion rates $\lambda(\mu)=\beta+Am^{J(\mu)}$ of a boundary regime are the per-capita growth rates of \cref{eq:JansenODE} at the equilibrium $m^{J(\mu)}$, and by \cref{sec:disc-sub} every such equilibrium carries exactly one boundary regime. Hence \cref{eq:punch}, restricted to $\mathcal M$, is exactly Jansen's condition for \cref{eq:JansenODE} with $q=m$. In our setting the weights in \cref{cond:HN} are explicit.

The weights $m$ in \cref{eq:punch} can also be identified with those of the classical Lyapunov function of \citet{volterra1931lecons} and \citet{goh1977global}. If $m>0$, the function
\begin{equation}
	V(x)=\sum_{i=1}^n\left(x_i-m_i-m_i\log\frac{x_i}{m_i}\right)
\end{equation}
satisfies $\dot V=(x-m)^\top A\,(x-m)<0$ for $x\neq m$, by \cref{eq:negdef}, along solutions of \cref{eq:JansenODE}. Up to an additive constant, $V(x)=\mathbf 1^\top x-\log Q(x)$ with $Q(x)=\prod_ix_i^{m_i}$, and, as shown above, $m^\top\lambda(\mu)$ is the long-run growth rate of $\log Q(X)$ near the support of $\mu$. The positivity in \cref{eq:punch} is that of $-\dot V$ evaluated at the mean of a boundary regime: since $\mu(X)=m^{J(\mu)}$, \cref{eq:weighted-mu} reads $m^\top\lambda(\mu)=-(\mu(X)-m)^\top A(\mu(X)-m)$, which is positive by \cref{eq:negdef} because $\mu(X)\neq m$. The linear part $\mathbf 1^\top x$ is the function that controls large densities in \cref{lem:HNass}, where $c=\mathbf 1$. Using $V$ directly is less effective: the generator $\mathcal L$ of \cref{eq:SDE} gives
\[
	\mathcal LV(x)=(x-m)^\top A\,(x-m)+\frac12\sum_{i=1}^n\Sigma_{ii}x_i ,
\]
which, by \cref{eq:negdef}, is negative outside a bounded set, but in general not near $\partial\R^n_+$ unless the noise is small. A Lyapunov argument based on $V$ therefore yields only sufficient conditions involving the noise intensities, whereas averaging $\log Q$ with respect to the boundary measures removes this restriction.

For a general $D$ as in \cref{eq:VL}, the same statements hold in the original coordinates with the weights $Dm$ in place of $m$, the function $V(x)=\sum_id_i\left(x_i-m_i-m_i\log(x_i/m_i)\right)$, and the quadratic forms $(x-m)^\top DA\,(x-m)$; these are the expressions above written for $\tilde X=DX$, whose mean is $\tilde m=Dm$.

\subsection{Subcommunities}\label{sec:disc-sub}

\Cref{thm:main} also describes every stationary regime on the boundary. Let $\emptyset\neq J\subsetneq[n]$. The coordinates in $J$ of a solution started in $\R^{J,\circ}_+$ solve a system of the form \cref{eq:SDE} with parameters $\alpha^J$, $A^J$ and the rows of $\Gamma$ indexed by $J$, whose covariance matrix $\Sigma^J$ is positive definite. Moreover $D^JA^J+(A^J)^\top D^J$ is a principal submatrix of $DA+A^\top D$, so \cref{ass:VL} holds for the subsystem. Applying \cref{thm:main} to it, $X$ has an invariant probability measure $\pi^J$ supported on $\R^{J,\circ}_+$ if and only if $\widehat m^J>0$; it is then unique and ergodic, its mean is $m^J$, and it attracts every initial condition in $\R^{J,\circ}_+$. Since every $\mu\in\mathcal M$ is supported on a single face (\cref{lem:weighted}), with $\pi^\emptyset:=\delta_0$,
\begin{equation}\label{eq:M-explicit}
	\mathcal M=\left\{\pi^J:\ J\subsetneq[n],\ \widehat m^J>0\right\}.
\end{equation}
In particular $\mathcal M$ is finite, and its elements are in one to one correspondence with the feasible boundary equilibria of $\dot x=\Diag(x)(\beta+Ax)$.

This order of reasoning differs from that of the classifications available for low dimensions and special topologies \citep[see][]{hening2018stochastic,hening2018persistence,hening2022classification}. There, applying \cref{thm:HN} to establish invariant coexistence requires the invasion rates of every element of $\mathcal M$, so $\mathcal M$ is determined first, face by face: whether a stationary regime exists on a face is established from the invasion rates of the regimes on its subfaces, starting from $\delta_0$, and the persistence condition is then verified case by case. The number of faces, and of cases, grows like $2^n$. Our proof of \cref{thm:main} avoids this step. For an element $\mu\in\mathcal M$ it uses only two facts, both given by \cref{lem:weighted}: $\mu$ lives on some proper face $J(\mu)$, and its mean is the face equilibrium $m^{J(\mu)}$. Its weighted invasion rate is therefore a function of the face alone,
\begin{equation}
	m^\top\lambda(\mu)=\phi\left(J(\mu)\right),\qquad
	\phi(J):=-\left(m^J-m\right)^\top A\left(m^J-m\right),
\end{equation}
and $\phi(J)$ is defined for every proper face $J$, whether or not $\widehat m^J>0$, that is, whether or not $J$ actually carries a stationary regime. The lower bound \cref{eq:punch} is the minimum of $\phi$ over all proper faces, so it holds without knowing which faces are realized. The description \cref{eq:M-explicit} of $\mathcal M$ is an output of the theorem rather than an input to its proof.

Read in this way, \cref{thm:main} reduces the community structure of \cref{eq:SDE} to the equilibrium structure of the deterministic system \cref{eq:JansenODE}. A subcommunity $J$ has a stationary regime if and only if $\widehat m^J=-(A^J)^{-1}\beta^J>0$, that is, if and only if \cref{eq:JansenODE} has a feasible equilibrium on the face $\R^J_+$; this is decided face by face, independently of the other faces, with no recursion. A species $i\notin J$ can invade that regime if and only if $(\beta+Am^J)_i>0$, the sign of its per-capita growth rate at the same equilibrium, and every invariant probability measure of $X$ is a mixture of the regimes $\pi^J$ obtained in this way. Environmental noise enters this picture only through the It\^o shift $\alpha\mapsto\beta$. When $m>0$ fails, the deterministic theory predicts that the community settles on the support of the unique saturated equilibrium of \cref{eq:JansenODE} \citep{takeuchi1980existence}; establishing the corresponding convergence for $X$ requires the extinction results of \citet{hening2018coexistence} and is not pursued here.

\section*{Acknowledgments}

The author thanks the Facultad de Ciencias of the Universidad Nacional de Colombia, Sede Medell\'{\i}n, for funding and supporting the project ``Resilience in interconnected systems'' through grant with HERMES code 202010049694. The author is also grateful to Carlos Osorio, Alexander Valencia and Jose Quintero, whose interest in this subject sparked the project and who contributed through discussions during its development.

\paragraph{Use of AI tools.} This paper was written through a human--AI collaboration. Claude (Anthropic), with the Claude Opus 5.5 model used in Cowork mode, was used for literature search and analysis, numerical experimentation, the proposal of technical and conceptual strategies for proofs and mathematical arguments, and iterative revision of the readability, structure, grammar and English spelling of the manuscript. The author was responsible for the ideas, the structure of the presentation, the main directions of inquiry, the argumentative emphasis, the final selection and evaluation of the relevant literature, the conceptual judgments, and the verification of all mathematical derivations. The author takes full responsibility for the content of the paper and the correctness of its results.

\bibliographystyle{plainnat}
\bibliography{pred_prey_arxiv}

\end{document}